\begin{document}
\title{On the (Dis)connection Between Growth and Primitive Periodic Points}
\author{Adi Gl\"ucksam and Shira Tanny}
\date{}
\maketitle

\begin{abstract}
{In 1972, Cornalba and Shiffman showed that the number of zeros of an order zero holomorphic function in two or more variables can grow arbitrarily fast.  We generalize this finding to the setting of complex dynamics, establishing that the number of isolated primitive periodic points of an order zero holomorphic function in two or more variables can grow arbitrarily fast as well. This answers a recent question posed by L. Buhovsky, I. Polterovich, L. Polterovich, E. Shelukhin and V. Stojisavljevi{\'c}.}
\end{abstract}


\section{Introduction}\label{sec:intro}
{B\'ezout theorems seek to explore the relation between the growth of a holomorphic function and the size of its zeros set.} The classical B\'ezout theorem states that the number of zeros of $n$ polynomials over $n$ variables is bounded by the product of their degrees. A natural question is whether this theorem extends to general holomorphic functions. Here, the notion of degree is replaced by the notion of growth rate, measured by the asymptotic behavior of its \emph{maximum modulus function}, $M_f(r):=\max_{\abs z\le r}\abs{f(z)}.$

Jensen's formula gives a full description of this relation in complex dimension 1. Formally, assuming that $f(0)\neq 0$, Jensen's formula implies that for every $a>1$ there exists $C=C(f,a)>0$ such that 
$$
\# \bset{\abs z\le r: f(z)=0} \leq C\log M_f(a\cdot r) \qquad \text{for all}\qquad r>0.
$$

In \cite{cornalba1972counterexample} Cornalba and Shifmann showed this phenomenon does not extend to higher dimensions. They constructed a holomorphic function $F:\C^2\rightarrow\C^2$ satisfying $\log M_F(r) = O\bb{\log^2 r}$, 
whose zeros grow arbitrarily fast. Here, and everywhere else in the paper, $O\bb{\log^2 r}$ means that there exists a constant $0<c$ for which $\log M_F(r)\le c\log^2 (r)$. \\

In a recent paper, \cite{buhovsky2023persistent},  L. Buhovsky, I. Polterovich, L. Polterovich, E. Shelukhin and V. Stojisavljevi{\'c} proposed a new way to obtain a transcendental analog to B\'ezout theorem, replacing the traditional zero count with the coarse zero count. Roughly speaking, the \emph{coarse count} of zeros of a holomorphic function in $\C^n$ is the number of connected components on which it is small, that contain a zero. They proved that this coarse count can be bounded in terms of the maximum modulus function, thus establishing an analog to B\'ezout  theorem in the setup of coarse count.

This version of B\'ezout  theorem can be imported into the world of complex dynamics. It implies that the coarse count of $p$-periodic points of holomorphic functions is also bounded by a function of their growth rate. Moreover, the Cornalba-Shifmann example (shifted by the identity map) shows that the honest count of fixed points does not obey such a bound. In \cite{buhovsky2023persistent} they asked whether the construction carried out by Cornalba-Shifmann can be generalized to construct an order zero holomorphic function with arbitrarily many primitive periodic points of higher periods.
\begin{defn}
Let $f:\C^n\rightarrow\C^n$ be a holomorphic function. 
\begin{itemize}
	\item We say $z\in\C^n$ is $p$-\emph{primitive periodic point for $f$} (or $p$-PPP for short) if $f^{\circ j}(z)\neq z$ whenever $1\le j\le p-1$, but $f^{\circ p}(z)=z$.
	\item Denote by $\nu_p(f,r)$ the number of $p$-PPP lying in a ball of radius $r$ centered at the origin, namely
	$$
	\nu_p(f,r):=\#\bb{B(0,r)\cap \bset{z, z \text{ is a p-PPP for }f}}.
	$$
\end{itemize}
\end{defn}
\begin{quest}[{\cite[Question 1.12]{buhovsky2023persistent}}]\label{que:Buh_etal}
	Does there exist a transcendental entire map, $f$, of order 0 (i.e., the modulus $M_f(r)$ grows slower than $e^{r^\varepsilon}$ for every $\varepsilon$) for which $\nu_p(f,r)$ grows arbitrarily fast in $p$ and $r$?
\end{quest}
For a fixed period, $p$, a simple modification of the Cornalba--Shiffman example produces a holomorphic function, $f$, of order zero for which $\nu_p(f,r)$  grows arbitrarily fast in $r$; see Example~\ref{ex:single_period} below. However, producing a holomorphic function with many primitive periodic points of varying prescribed periods is more complicated. Our main result states that this is indeed possible.
\begin{thm}\label{thm:PPP_all_rate}
For every sequence of periods, $\bset{p_n}_{n=1}^\infty$, and for every rate, $\bset{m_n}_{n=1}^\infty$, there exists a holomorphic function $F:\C^2\rightarrow\C^2$ satisfying 
$$
\nu_{p_n}(F,2^n)\ge m_n\quad\text{and}\quad \log M_F(r) = O \bb{\log ^2(r)}.
$$
\end{thm}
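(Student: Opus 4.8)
The plan is to build $F$ so that its primitive $p_n$-periodic points arise as orbits of cyclic linear actions. Fix for each period value $p$ a primitive $p$-th root of unity $\omega_p$. I would look for $F$ of the form $F(x)=\Omega(x)(x+G(x))$, $x\in\mathbb{C}^2$, with $G\colon\mathbb{C}^2\to\mathbb{C}^2$ and $\Omega\colon\mathbb{C}^2\to\mathbb{C}$ entire, where the data are arranged so that $G$ vanishes (to first order) and $\Omega$ is identically $\omega_{p_n}$ along a prescribed finite union of $\langle\omega_{p_n}\rangle$-orbits contained in $B(0,2^n)$. Then at a point $\zeta$ of such an orbit one has $F(\zeta)=\Omega(\zeta)\zeta=\omega_{p_n}\zeta$, and inductively $F^{\circ j}(\zeta)=\omega_{p_n}^{\,j}\zeta$, so $F^{\circ p_n}(\zeta)=\zeta$ while $F^{\circ j}(\zeta)=\omega_{p_n}^{\,j}\zeta\neq\zeta$ for $0<j<p_n$ (provided $\zeta\neq0$): thus $\zeta$ is a primitive $p_n$-periodic point, and its entire orbit lies on the sphere of radius $|\zeta|$, hence in $B(0,2^n)$. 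When all $p_n$ equal a fixed $p$ one may take $\Omega\equiv\omega_p$ constant, recovering a construction of the type of Example~\ref{ex:single_period}; the content of the theorem is to encode all periods simultaneously in one entire map of slow growth, and the enabling mechanism is the Cornalba--Shiffman phenomenon: an arbitrarily large amount of prescribed vanishing can be realized by an entire map of $\mathbb{C}^2$ of maximal modulus $O(\log^2 r)$, by spreading the prescribed points over many affine hyperplanes placed at rapidly increasing radii.

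Concretely: \textbf{Step 1 (configuration).} For each $n$ set $M_n=\lceil m_n/p_n\rceil$ and choose distinct nonzero $\zeta_{n,1},\dots,\zeta_{n,M_n}$ with $|\zeta_{n,k}|<2^n$, so that the orbits $\mathcal{O}_{n,k}=\{\omega_{p_n}^{\,i}\zeta_{n,k}:0\le i<p_n\}$ are pairwise disjoint, their union $Z=\bigcup_{n,k}\mathcal{O}_{n,k}$ is discrete, and — this is the delicate point — the points of $Z$ lie on sufficiently many, sufficiently sparsely distributed complex lines for the interpolation problems below to be solvable within the $\log^2 r$ budget; in particular the radial scales of successive clusters must grow fast enough relative to the $m_n$ while staying below $2^n$. \textbf{Step 2 (auxiliary functions).} Using a Hermite version of the Cornalba--Shiffman construction, produce entire $G\colon\mathbb{C}^2\to\mathbb{C}^2$ with $\log M_G(r)=O(\log^2 r)$ that vanishes at each point of $Z$ with differential $I$ there, and entire $\Omega\colon\mathbb{C}^2\to\mathbb{C}$ with $\log M_\Omega(r)=O(\log^2 r)$ equal to $\omega_{p_n}$ with vanishing differential at each point of $\mathcal{O}_{n,k}$; the target values $\omega_{p_n}$ all have modulus $1$, which is what keeps $\Omega$ of slow growth even for unbounded $p_n$. \textbf{Step 3 (conclusion).} Put $F=\Omega\cdot(\mathrm{id}+G)$. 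Then $\log M_F(r)\le\log M_\Omega(r)+\log(r+M_G(r))=O(\log^2 r)$. For $\zeta\in\mathcal{O}_{n,k}$ the computation above shows $\zeta$ is a primitive $p_n$-periodic point in $B(0,2^n)$; moreover, since along each orbit $G=0$, $DG=I$, $\Omega=\omega_{p_n}$, $d\Omega=0$, the chain rule gives $DF(\omega_{p_n}^{\,i}\zeta)=2\omega_{p_n}I$, hence $D(F^{\circ p_n})(\zeta)=(2\omega_{p_n})^{p_n}I=2^{p_n}I$, so $D(F^{\circ p_n}-\mathrm{id})(\zeta)=(2^{p_n}-1)I$ is invertible and $\zeta$ is an isolated (nondegenerate) periodic point. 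Thus each cluster contributes $p_n$ isolated primitive $p_n$-periodic points in $B(0,2^n)$, and $\nu_{p_n}(F,2^n)\ge M_np_n\ge m_n$; note this lower bound holds regardless of whether $\nu_{p_n}$ counts all such points or only isolated ones.

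The main obstacle is Step 2 together with the matching constraints imposed on the configuration in Step 1: one must upgrade the Cornalba--Shiffman construction to produce entire maps of growth $O(\log^2 r)$ realizing prescribed values \emph{and} first jets (not merely prescribed zero sets) along all of $Z$, and verify that the essentially unbounded amount of data carried by the $n$-th cluster (since $m_n$ is arbitrary) can still be absorbed once the clusters are placed at radii increasing rapidly enough while remaining below $2^n$; doing this for the two interpolation problems (for $G$ and for $\Omega$) on the \emph{same} set $Z$, and compatibly with the $\langle\omega_{p_n}\rangle$-symmetric cluster structure, is where the real work lies. By contrast the dynamical bookkeeping — disjointness of orbits, discreteness of $Z$, primitivity, nondegeneracy — is routine, following from the explicit form of $F$ and generic choices in Step 1.
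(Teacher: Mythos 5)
Your dynamical bookkeeping (primitivity, the chain--rule computation $D(F^{\circ p_n})(\zeta)=(2\omega_{p_n})^{p_n}I$, nondegeneracy) is correct \emph{granted} Step 2, but Step 2 is not a technical afterthought --- it is the entire content of the theorem, and you have only asserted it. Worse, the interpolation you demand is strictly stronger than what the Cornalba--Shiffman mechanism delivers: you need an order-zero map $G$ with the \emph{exact} first jet $G=0$, $DG=I$ at every point of $Z$, and an order-zero scalar $\Omega$ with prescribed value $\omega_{p_n}$ \emph{and} $d\Omega=0$ at all $M_np_n$ points of the $n$-th cluster (arbitrarily many per level). The paper never prescribes jets: it builds the symmetrized Cornalba--Shiffman map explicitly from polynomials $P_n$ with tiny coefficients, reads off the (nonzero, equivariantly transforming) derivatives from that formula, and arranges the Jacobian of $F^{\circ p_n}$ to be triangular so that only $\tfrac{d}{dz}g_1$ and $\tfrac{d}{dw}g_2$ matter; its ``dispatcher'' is a \emph{one-variable} H\"ormander interpolation needing one value per level, at $w=2^n$, not a two-variable jet interpolation at all cluster points.

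There is also a concrete geometric obstruction created by your choice of letting $\omega_{p_n}$ act by scalar multiplication on all of $\C^2$. Each orbit $\mathcal{O}_{n,k}=\{\omega_{p_n}^i\zeta_{n,k}\}$ spans, and is only contained in, the complex line $\C\cdot\zeta_{n,k}$ through the origin (any affine line through two orbit points is that line). Hence your clusters cannot be covered by a sparse family of pairwise disjoint affine hyperplanes; they sit on infinitely many lines through $0$, and no nonzero entire function can vanish on infinitely many such lines (it would vanish to infinite order at the origin). But the disjoint-hyperplane structure --- all of cluster $n$ inside $\{w=2^n\}$, with $g_2(w)=\prod_j(1-w/2^j)$ of order zero handling the second coordinate and $\sum_n 2^{-\ell_n}Q_n(w)P_n(z)$ handling the first --- is precisely the engine that makes the Cornalba--Shiffman growth bound work. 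The paper keeps this structure by rotating only the first coordinate and fixing $w=2^n$ along each orbit; your configuration destroys it, so ``a Hermite version of Cornalba--Shiffman'' is not an available black box for your $Z$. To repair the argument you would either have to redesign the configuration so each cluster lies in a fixed hyperplane with the cyclic action confined to one coordinate (at which point you essentially reproduce the paper's proof), or supply a genuinely new two-variable jet-interpolation theorem at growth $O(\log^2 r)$, which you have not done.
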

The constant in $O(\log^2(r))$ is independent of the sequences. 

Unlike the example by Cornalba and Shiffman, the holomorphic functions constructed in Theorem~\ref{thm:PPP_all_rate} are not completely explicit. In particular, it is not clear how to estimate or even bound from above or below their coarse zero count.

The main idea of the proof is to utilize the simple example constructed in Example~\ref{ex:single_period} with different fixed periods, and then use a `dispatcher' function to choose between different periods. The main tool in the construction is a theorem of H\"ormander (see Theorem~\ref{thm:Hormander} below) which guarantees the existence of solutions to non-homogeneous $\bar\partial$-equations with certain integral bounds. This theorem allows one to construct entire functions with distinct behaviours in different regions. The use of H\"ormander's theorem requires an underlying subharmonic function which is large in the area between distinct regions, but very negative where we want a good approximation. Section~\ref{sec:subharmonic} contains the construction of subharmonic functions that will be used in the application of H\"ormander's theorem in Section~\ref{sec:proof}. In Section~\ref{sec:proof} we construct the required holomorphic functions and prove Theorem~\ref{thm:PPP_all_rate}.

\subsection*{Acknowledgements}
We thank Lev Buhovsky and Leonid Polterovich for useful discussions.

A.G. is grateful for the support of the Golda Meir fellowship.

S.T. was partially supported by a grant from the Institute for Advanced Study School of Mathematics,  the Schmidt Futures program,  a research grant from the Center for New Scientists at the Weizmann Institute of Science, and Alon fellowship.


\section{Preliminary Results on Subharmonic functions}\label{sec:subharmonic}
We commence the discussion with the following lemma whose role is to modify a given subharmonic function assigning $-\infty$ to prescribed points. 
\begin{lem}[The Puncture Lemma]\label{lem:punctures}
Let $u$ be a continuous subharmonic function and, for $N\in\N$ or $N=\infty$, let $\bset{B_k}_{k=1}^N$ be a sequence of pairwise disjoint disks, $B_k=B(z_k,r_k)$. Assume that for every $k$, $\underset{z\in B_k}\inf\; \Delta u>0$. Then there exists a subharmonic function $v$ satisfying
\begin{enumerate}
\item $u(z)=v(z)$ whenever $z\nin\bunion k 1 N B_k$.
\item For every $\delta\in\bb{0,1}$, and $k\in\bset{1,2,\cdots,N}$
$$
 \underset{z\in B(z_k,\delta\cdot r_k)}\max\;v(z)\le \underset{z\in B_k}\max\; u(z)-c\cdot r_k^2\cdot \underset{z\in B_k}\inf\; \Delta u\log\bb{\frac1\delta},
$$
where $c>0$ is some uniform constant.
\end{enumerate}
\end{lem}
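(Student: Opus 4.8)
Write $\Omega$ for the domain of $u$ and, for each $k$, put $\lambda_k:=\inf_{B_k}\Delta u>0$. The plan is to leave $u$ untouched outside $\bigcup_k B_k$ and, inside each $B_k$, to add an explicit radial correction that vanishes to first order on $\partial B_k$ — so that the pieces glue into a subharmonic function agreeing with $u$ off the disks — while behaving like a negative multiple of $\log|z-z_k|$ near the centre — so that $v(z_k)=-\infty$ and $v$ is very negative there. Concretely I would take, for $z\in B_k$,
$$
p_k(z):=\frac{\lambda_k r_k^2}{2}\,\log\frac{|z-z_k|}{r_k}+\frac{\lambda_k r_k^2}{4}\left(1-\frac{|z-z_k|^2}{r_k^2}\right),
$$
and define $v:=u$ on $\Omega\setminus\bigcup_k B_k$ and $v:=u+p_k$ on each $B_k$ (this is unambiguous since the disks are pairwise disjoint). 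Two elementary computations underlie everything: $p_k$ and $\nabla p_k$ tend to $0$ as $|z-z_k|\to r_k$, and $\Delta p_k=\pi\lambda_k r_k^2\,\delta_{z_k}-\lambda_k\mathbf 1_{B_k}$; the quadratic term is exactly what makes the mass spread over $B_k$ cancel the logarithmic point mass, so that $p_k$ carries no charge on $\partial B_k$.

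Next I would verify that $v$ is subharmonic on $\Omega$. On each $B_k$ one can write $v=h_k+\frac{\lambda_k r_k^2}{2}\log\frac{|z-z_k|}{r_k}+\frac{\lambda_k r_k^2}{4}$, where $h_k:=u-\frac{\lambda_k}{4}|z-z_k|^2$ is subharmonic because $\Delta u\ge\lambda_k$ on $B_k$; being a sum of two subharmonic functions and a constant, $v$ is subharmonic on $B_k$, including at the centre $z_k$. Off $\bigcup_k\overline{B_k}$ we simply have $v=u$. Finally, in a neighbourhood of each $\partial B_k$ the function $v$ is continuous and, since $p_k$ extends by $0$ to a $C^1$ function, its distributional Laplacian there equals $\Delta u-\lambda_k\mathbf 1_{B_k}\ge0$, so $v$ is subharmonic there as well. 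These open sets cover $\Omega$, hence $v$ is subharmonic; for $N=\infty$ one uses in addition that each point lies in at most one $B_k$ and that $\sum_k\int_{B_k}|p_k|<\infty$, so the same local arguments apply.

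For the bound, fix $\delta\in(0,1)$ and a point $z$ with $|z-z_k|\le\delta r_k$. Since $h_k$ is subharmonic on $B_k$ and continuous on $\overline{B_k}$, the maximum principle gives
$$
h_k(z)\ \le\ \max_{\partial B_k}h_k\ =\ \max_{\partial B_k}u-\frac{\lambda_k r_k^2}{4}\ =\ \max_{B_k}u-\frac{\lambda_k r_k^2}{4},
$$
the last equality again by the maximum principle, now for $u$. Substituting into the formula for $v$, the two copies of $\frac{\lambda_k r_k^2}{4}$ cancel and
$$
v(z)\ \le\ \max_{B_k}u+\frac{\lambda_k r_k^2}{2}\log\frac{|z-z_k|}{r_k}\ \le\ \max_{B_k}u-\frac{\lambda_k r_k^2}{2}\log\frac1\delta,
$$
which is the desired estimate with $c=\frac12$.

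I expect the real difficulty to be conceptual rather than computational: pinning down the correct correction $p_k$. Subtracting a bare multiple of $\log\frac{|z-z_k|}{r_k}$ does not glue $C^1$-ly, and therefore either deposits a negative charge on $\partial B_k$ — destroying subharmonicity, since one cannot assume $\Delta u$ has compensating mass there — or perturbs $u$ outside $B_k$; the innocuous-looking quadratic term is exactly what lets the two uses of the hypothesis $\Delta u\ge\lambda_k$ — building $p_k$, and bounding $u$ on the inner circle via the maximum principle — dovetail and produce a constant $c$ that is independent of $\delta$. Once $p_k$ is in hand, every remaining step is routine.
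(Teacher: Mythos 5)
Your construction is correct, and it takes a genuinely different route from the paper's. The paper replaces $u$ inside each $B_k$ by the Poisson integral of its boundary values plus $A_k\log\frac{|z-z_k|}{r_k}$, and then verifies subharmonicity by a normal-derivative gluing criterion combined with the Poisson--Jensen formula; the admissible size of $A_k$ comes from a lower bound on the radial derivative of the Green potential of $\Delta u$ on the disk, which is where the (unspecified) universal constant $c$ originates. You instead keep $u$ and add the explicit radial correction $p_k$, whose quadratic term is tuned so that $p_k$ vanishes to first order on $\partial B_k$; the gluing is then automatic ($C^1$ matching, so no charge is deposited on $\partial B_k$), subharmonicity inside $B_k$ reduces to the hypothesis $\Delta u\ge\lambda_k$ via the decomposition $v=h_k+\frac{\lambda_k r_k^2}{2}\log\frac{|z-z_k|}{r_k}+\frac{\lambda_k r_k^2}{4}$ with $h_k:=u-\frac{\lambda_k}{4}|z-z_k|^2$ subharmonic, and the estimate follows from the maximum principle with the explicit constant $c=\frac12$. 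What your approach buys is an elementary, self-contained argument with an explicit constant and no appeal to Green-function estimates or the gluing claim; what the paper's approach buys is a harmonic replacement inside each disk, which is closer to the standard "colander" constructions it cites. Two small points you should make explicit if you write this up: (i) the justification of local integrability of $v$ near a possible accumulation point of the disks (needed to read off the distributional Laplacian when $N=\infty$) follows from local finiteness of the Riesz measure of $u$, which gives $\sum_{B_k\subset K}\lambda_k r_k^2\lesssim\Delta u(K)<\infty$ and hence $\sum_{B_k\subset K}\int_{B_k}|p_k|\lesssim\sum\lambda_k r_k^4<\infty$ on compacts $K$ — your stated condition is correct but deserves this one-line proof; (ii) the resulting $v$ equals $-\infty$ at each $z_k$, so it is not continuous — but the lemma does not require continuity of $v$, and the paper's own $v$ has the same feature.
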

See Figure~\ref{fig:Punct} for an illustration of the image of the subharmonic function $v$. 

\begin{figure}[ht]
	\centering
	\includegraphics[scale=0.5]{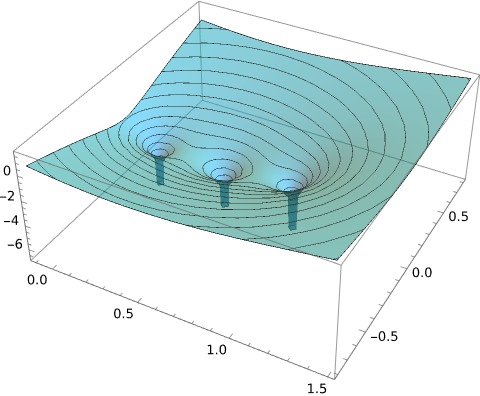}
	\caption{An illustration of the the image of the subharmonic function created by Lemma \ref{lem:punctures}}
	\label{fig:Punct}
\end{figure}

While the proof can be found in \cite[Section 3.1]{glucksam2022colander}, where $\mathcal K_0(t)=\log(t)$ and the function $v$ is any function, we include it here for the reader's convenience.

\begin{proof}
Define the function
$$
v(z)= \begin{cases}
	\bb{P_\D u_k}\bb{\frac{z-z_k}{r_k}}+A_k\log\bb{\frac{\abs{z-z_k}}{r_k}},& z\in B_k\\
	u(z),& \text{otherwise}
	\end{cases}
$$
where $u_k(z)=u\bb{r_k\cdot z+z_k}$ {(and therefore $u(B_k)=u_k(\D)$)}, the constants $A_k>0$ will be chosen momentarily, and $P_\D f$ is Poisson integral of a function $f$ defined by
\[P_\D f (re^{i\theta}):= \frac{1}{2\pi}\int_{-\pi}^\pi P_r(\theta-t)f(e^{it})\ dt,\quad \text{for} \quad P_r(\theta):=\frac{1-r^2}{1-2r\cos \theta+r^2}.\]
Note that $v$ is continuous, since the Poisson integral of any continuous function agrees with it on $\partial \D$. We need to choose the constants $A_k$ such that sub-harmonicity is preserved. We will use the following claim:
\begin{claim}[{\cite[p.24]{evdoridou2023unbounded}}]\label{clm:glueing}
Let $\Omega\subseteq\C$ be a domain, and let $\Omega_1\Subset\Omega$ be a domain, so that $\partial\Omega_1$ is an orientable smooth curve. Every function $f$ which is continuous on $\Omega$ and subharmonic on $\Omega_1\cup \bb{\Omega\setminus\overline{\Omega_1}}$, is subharmonic on $\Omega$ if on $\partial\Omega_1$ it satisfies
$$
\frac{\partial f}{\partial n_1}\le \frac{\partial f}{\partial n_2},
$$
where $n_1$ is the outer normal to $\Omega_1$ along $\partial\Omega_1$ and $n_2$ is the outer normal to $\Omega\setminus\Omega_1$ along $\partial\Omega_1$. 
\end{claim}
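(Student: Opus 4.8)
The plan is to reduce the statement to a local question at $\partial\Omega_1$. Subharmonicity is a local property, and by hypothesis $f$ is already subharmonic on the open set $\Omega_1\cup\bb{\Omega\setminus\overline{\Omega_1}}$, so the only thing left is to see that $f$ is subharmonic near each point of $\partial\Omega_1$. I would do this through the characterization of subharmonic functions by comparison with harmonic ones: $f$ is subharmonic iff for every closed disk $\overline D\subset\Omega$ and every $h$ continuous on $\overline D$, harmonic in $D$, with $f\le h$ on $\partial D$, one has $f\le h$ throughout $D$. (An equivalent route, sketched at the end, is to show instead that the distributional Laplacian of $f$ is a nonnegative measure.)

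So suppose this fails: there are $\overline D\subset\Omega$ and such an $h$ with $f\le h$ on $\partial D$, but $g:=f-h$ is positive somewhere in $D$. Since $g$ is continuous on the compact set $\overline D$ it attains $M:=\max_{\overline D}g>0$, and since $g\le 0$ on $\partial D$ this maximum is attained at an interior point. On $D\cap\Omega_1$ and on $D\setminus\overline{\Omega_1}$ the function $g$ is subharmonic (being subharmonic minus harmonic), so by the strong maximum principle, if $g$ attained $M$ at a point of either open set it would be constantly $M$ on the connected component through that point; propagating by continuity, the closure of that component would meet $\partial D$ — impossible, since $g\le 0<M$ there — or else $\partial\Omega_1$, in which case $g$ already equals $M$ at a point of $\partial\Omega_1$. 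After this use of the strong maximum principle we may assume that $g$ attains its positive maximum $M$ at a point $x_0\in\partial\Omega_1\cap D$ and is not constant near $x_0$ from either side of $\partial\Omega_1$.

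The contradiction now comes from Hopf's lemma, and this is the step I expect to require the most care. Since $\partial\Omega_1$ is a smooth curve, both $\Omega_1$ and $\Omega\setminus\overline{\Omega_1}$ satisfy an interior ball condition at $x_0$. Applying Hopf's lemma to $g$ on the $\Omega_1$-side shows that the one-sided derivative of $g$ at $x_0$ in the direction of the outer normal $n_1$ to $\Omega_1$, taken from inside $\Omega_1$, is strictly positive; applying it on the other side shows the same for the outer normal $n_2$ to $\Omega\setminus\Omega_1$, from inside that region. Since $h$ is smooth these conclusions transfer from $g$ to $f$, so at $x_0$ one has $\frac{\partial f}{\partial n_1}+\frac{\partial f}{\partial n_2}>0$; that is, the outer normal derivative of $f$ jumps strictly downward across $\partial\Omega_1$ at $x_0$, contrary to the condition assumed along $\partial\Omega_1$. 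The points needing real care are: (i) Hopf's lemma for a function that is only subharmonic, not $C^2$, which is obtained by the usual comparison of $g$ with a harmonic barrier on an annulus inside the interior ball (and which is why one wants the one-sided normal derivatives in the statement to exist); and (ii) the strong-maximum-principle bookkeeping used to place the maximizer on $\partial\Omega_1$ and to rule out the locally-constant cases.

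The alternative route avoids the case analysis. For $0\le\varphi\in C_c^\infty(\Omega)$ supported near $\partial\Omega_1$, split $\int_\Omega f\,\Delta\varphi=\int_{\Omega_1}f\,\Delta\varphi+\int_{\Omega\setminus\overline{\Omega_1}}f\,\Delta\varphi$ and apply Green's identity twice in each integral. The terms containing $\partial_n\varphi$ from the two sides cancel because $f$ is continuous across $\partial\Omega_1$, leaving the sum of $\int(\Delta f)\varphi$ over the two open pieces — which is $\ge 0$ — and a boundary integral over $\partial\Omega_1$ of $\varphi$ against the jump of the normal derivative of $f$, which is $\ge 0$ exactly by the assumed condition. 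Hence $\Delta f\ge 0$ as a distribution, and since $f$ is continuous this forces $f$ to be subharmonic on $\Omega$. Here the technical heart is the validity of Green's identity for a subharmonic function up to the $C^1$ curve $\partial\Omega_1$ — again where the one-sided normal derivatives enter; I would prove it first for $f$ smooth up to $\partial\Omega_1$ from each side and then remove the regularity by mollifying $f$ on each side and matching along $\partial\Omega_1$ as $\varepsilon\to 0$.
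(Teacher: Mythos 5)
The paper does not prove this claim at all --- it is quoted verbatim from the cited reference --- so there is no internal argument to compare yours against; I can only judge the proposal on its own terms. Both of your routes (harmonic majorants on disks plus the strong maximum principle and Hopf's lemma; and the distributional computation via Green's identity) are the standard ways to prove this gluing lemma, and the overall strategy is sound. Two points, however, need to be made explicit rather than waved at. First, the sign convention: with $n_2=-n_1$, the inequality $\frac{\partial f}{\partial n_1}\le\frac{\partial f}{\partial n_2}$ read literally (each one-sided derivative taken from its own side, in its own outward direction) is \emph{not} the condition that makes the claim true --- a concave kink modelled on $-|x|$ across a flat piece of the interface has $\frac{\partial f}{\partial n_1}=\frac{\partial f}{\partial n_2}=1$ in that reading yet is superharmonic there. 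The condition you actually contradict at the end, $\frac{\partial f}{\partial n_1}+\frac{\partial f}{\partial n_2}\le 0$, i.e.\ that the derivative in the fixed direction $n_1$ does not decrease as one crosses from inside $\Omega_1$ to outside, is the intended one (it is exactly how the paper applies the claim, identifying $\frac{\partial v}{\partial n_2}$ with $\frac{\partial u}{\partial n}$ for the \emph{outward radial} $n$); you should state that you are using this reading, since otherwise your final ``contrary to the condition assumed'' does not follow. Second, in the maximum-principle route the case you set aside --- $g\equiv M$ on the interior balls on \emph{both} sides of $x_0$ --- is not disposed of by the bookkeeping you describe and is the only place the argument could leak: it is closed by observing that, once Hopf rules out the other cases, every point of $\{g=M\}$ on $\partial\Omega_1\cap D$ is an interior point of $\{g=M\}$, so this set is open as well as closed in $D$, forcing $g\equiv M$ on $D$ and contradicting $g\le 0$ on $\partial D$. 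With those two items supplied, your first route is a complete proof (Hopf's lemma in the barrier form you describe indeed only needs the liminf of difference quotients, and the hypothesis supplies the existence of the one-sided derivatives). The second route is weaker as written: Green's identity up to $\partial\Omega_1$ for a function that is merely subharmonic on each side, with only pointwise one-sided normal derivatives assumed, requires regularity the hypotheses do not literally provide, so I would present it only as a heuristic or restrict it to the regularity actually present in the paper's application.
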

We will use this claim with $\Omega:=\C$, and $\Omega_1:=B_k$. In this case, we need to show that along $\partial B_k$
$$
\frac{\partial v}{\partial n_1}\le \frac{\partial v}{\partial n_2}=\frac{\partial u}{\partial n},
$$
and the latter is known. To calculate $\frac{\partial v}{\partial n_1}$ we will use Poisson-Jenssen's formula (see e.g. \cite[Theorem 4.5.1]{ransford1995potential}), which, in this case, boils down to:
\[u_k(z) = P_\D u_k (z) + G_\D(z)\]
where
$$
G_{\D}(z):=\frac1{2\pi}\integrate {\D}{}{g_{\D}(z,w)\Delta u_k(w)}\lambda_2(w), \qquad g_{\D}(z,w):= \log\Big|\frac{z-w}{1-z \bar w}\Big|.
$$
Here, and throughout the rest of the paper, $\lambda_2$ denotes Lebegue's measure in $\C$. Using the above, we see that $v$ is subharmonic if for every $\xi\in \partial B_k$:
\begin{eqnarray*}
\frac{\partial v}{\partial n_1}(\xi)&=&\limit r {1^-}\frac{u(\xi)-P_{\D}(u_k)\bb{\frac{r\bb{\xi-z_k}}{r_k}}}{1-r}+\frac{A_k}{r_k} = \limit r {1^-}\frac{u_k\bb{\frac{\xi-z_k}{r_k}}-u_k\bb{\frac{r\bb{\xi-z_k}}{r_k}}+G_{\D}\bb{\frac{r\bb{\xi-z_k}}{r_k}}}{1-r}+\frac{A_k}{r_k} \\
&=&\frac{\partial \bb{u_k\circ\bb{\frac{z-z_k}{r_k}}}}{\partial n}\bb{\xi}+\limit r {1^-}\frac{G_{\D}\bb{\frac{r\bb{\xi-z_k}}{r_k}}}{1-r}+\frac{A_k}{r_k}\\
&=&\frac{\partial u}{\partial n}\bb{\xi}-\frac{\partial G_{\D}}{\partial r}\bb{\frac{\xi-z_k}{r_k}}\cdot\frac1{r_k}+\frac{A_k}{r_k}\le \frac{\partial u}{\partial n}(\xi)=\frac{\partial v}{\partial n_2}(\xi)
\end{eqnarray*}
using the fact that $G_\D$ vanishes on $\partial \D$ (since $u=P_\D u$ there). Then $\frac{\partial v}{\partial n_1}(\xi)\leq \frac{\partial v}{\partial n_2}(\xi)$ if and only if 
\[
\frac{\partial u}{\partial n}\bb{\xi}-\frac{\partial G_{\D}}{\partial r}\bb{\frac{\xi-z_k}{r_k}}\frac1{r_k}+\frac{A_k}{r_k}\le \frac{\partial v}{\partial n_2}\bb{\xi}=\frac{\partial u}{\partial n}\bb{\xi}\iff A_k\le \underset{\xi\in\partial\D}\min\; \frac{\partial G_{\D}}{\partial r}\bb{\frac{\xi-z_k}{r_k}}
\]
Since the collection $\bset{\frac{\partial g_{\D}\bb{\cdot,w}}{\partial n}(\xi)}_{w\in \D,\xi\in\partial\D}$ is uniformly integrable, we may change the order of the integral and the derivative to obtain that
\begin{eqnarray*}
\frac{\partial G_{\D}}{\partial r}\bb{\frac{\xi-z_k}{r_k}}&=&\frac{\partial}{\partial r}\bb{\frac1{2\pi}\integrate {\D}{}{g_{\D}\bb{\bb{\frac{\xi-z_k}{r_k}},y}\Delta u_k(y)}\lambda_2(y)}=\frac1{2\pi}\integrate {\D}{}{\frac{\partial g_{\D}(\cdot,y)}{\partial r}\bb{\frac{\xi-z_k}{r_k}}\Delta u_k(y)}\lambda_2(y)\\
&\ge&\underset{z\in\D}\inf\; \Delta u_k(z)\cdot \frac1{2\pi}\inf_{\eta\in \D}\integrate {\D}{}{\frac{\partial g_{\D}(\cdot,y)}{\partial r}(\eta)}\lambda_2(y)\ge c\cdot r_k^2\cdot\underset{z\in B_k}\inf\; \Delta u.
\end{eqnarray*}
Note that $c$ is some uniform constant, which does not depend on the function $u$ or the collection $\bset{B_k}$. A direct computation shows that we may choose $c$ to be positive (this follows from the fact that the radial derivative of the Green function $g_\D(\cdot,\eta)$ is everywhere non-negative and does not vanish identically). Setting $A_k:=c\cdot r_k^2\cdot \underset{z\in B_k}\inf\; \Delta u$ guarantees that $v$ is subharmonic.

Finally, for every $\delta\in(0,1)$
\begin{align*}
\underset{\frac{\abs{z-z_k}}{r_k}=\delta}\max\;v(z)=\underset{\frac{\abs{z-z_k}}{r_k}=\delta}\max\; P_\D\bb{u_k}\bb{\frac{z-z_k}{r_k}}+A_k\log\bb{\frac{\abs{z-z_k}}{r_k}}\le \underset{z\in B_k}\max\; u(z)+c\cdot r_k^2\cdot\underset{z\in B_k}\inf\; \Delta u\log(\delta).
\end{align*}
\end{proof}
\section{The proof}\label{sec:proof}
In this section we prove Theorem~\ref{thm:PPP_all_rate}. Extending upon Example~\ref{ex:single_period} below, we need a function that will coordinate between different periods. In Section~\ref{subsec:dispatcher}, we construct a holomorphic function that assumes prescribed values along the sequence $\{2^n\}_{n\in\N_{\ge 2}}$ and whose role is to accommodate for different periods. This part uses the results on subharmonic functions and a theorem of H\"ormander. 
In Section~\ref{subsec:Cornalba_Shiffman}, we modify the construction by Cornalba-Shiffman, \cite{cornalba1972counterexample}, to construct a holomorphic function of two variables with the required number of $p_n$-PPP in every ball $B\bb{0,2^n+1}$, and the required growth rate, thus proving Theorem~\ref{thm:PPP_all_rate}.
\subsection{Constructing the ``dispatcher" functions} \label{subsec:dispatcher}
\begin{lem}\label{lem:dispatcher}
There exists $C>1$ so that for every (finite or infinite) subset $J\subset \N_{n\ge 2}$ and every $M\le (\min J)^2$, there exists an entire function $f:\C\rightarrow\C$ satisfying that
\begin{enumerate}[label=(\roman*)]
\item \label{itm:growth} $M_f(r):=\underset{\abs z=r}\max\abs f\le C\cdot e^{C\log^2(r+1)}$.
\item \label{itm:values} $f\bb{2^n}=\begin{cases}
							M,& n\in J\\
							0,& n\in\N_{n\ge 2}\setminus J
							\end{cases}$.

\end{enumerate}
\end{lem}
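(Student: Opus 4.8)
The plan is to produce $f$ by adding a small $\bar\partial$-correction to a smooth model. Write $N_0:=\min J$ and fix the ``puncture'' disks $B_n:=B(2^n,r_n)$ with $r_n:=2^n/8$; these are pairwise disjoint, and each $B_n$ sits in the round annulus $\{7\cdot 2^n/8<\abs z<9\cdot 2^n/8\}$. First I would build a smooth bounded function $g:\C\to\C$ with $\abs g\le M$ that is constant on each $B_n$, equal to $M$ there when $n\in J$ and to $0$ there when $n\in\N_{\ge 2}\setminus J$, equal to $0$ on $\{\abs z\le 2^{N_0-1}\}$, and that interpolates radially between consecutive values (including the jump from $0$) on annuli $A_n$, each contained in $\{c_1 2^n<\abs z<c_2 2^n\}$ and chosen disjoint from every $B_k$. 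Then $\bar\partial g$ is supported in $\bigcup_n A_n$, and on $A_n$ one has $\abs{\bar\partial g}\le CM/2^n$ while $\lambda_2(A_n)\le C4^n$.

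Next I would produce the weight. Start from $\varphi_0(z):=B\log^2(\abs z+e)$: a direct computation gives $\Delta\varphi_0(z)\ge cB(1+\abs z)^{-2}>0$, so $\varphi_0$ is strictly subharmonic, $\inf_{B_n}\Delta\varphi_0\ge c'B\,4^{-n}$, and $\varphi_0(z)=O(\log^2\abs z)$. Applying the Puncture Lemma (Lemma~\ref{lem:punctures}) with $u=\varphi_0$ and the disks $\{B_n\}_{n\ge 2}$ yields a subharmonic $\varphi$ that agrees with $\varphi_0$ off $\bigcup_n B_n$, satisfies $\varphi\le B\log^2(\abs z+C_1)$ everywhere, and obeys
$$\max_{\abs{z-2^n}=\delta r_n}\varphi(z)\ \le\ \max_{B_n}\varphi_0\ +\ A_n\log\delta,\qquad \delta\in(0,1),$$
with $A_n:=c\,r_n^2\,\inf_{B_n}\Delta\varphi_0\ge c''B$ for a universal $c''>0$. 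The key point is to fix $B$ once and for all, large enough (depending only on the universal constants above) that $A_n\ge 2$ for every $n$.

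Then I would invoke H\"ormander's theorem (Theorem~\ref{thm:Hormander}) to solve $\bar\partial u=\bar\partial g$ with
$$\int_\C\frac{\abs u^2\,e^{-\varphi}}{(1+\abs z^2)^2}\,d\lambda_2\ \le\ \int_\C\abs{\bar\partial g}^2e^{-\varphi}\,d\lambda_2.$$
On the support of $\bar\partial g$ we have $\varphi=\varphi_0\ge B(\log 2)^2n^2$, so $\int_{A_n}\abs{\bar\partial g}^2e^{-\varphi}\le CM^2e^{-B(\log 2)^2n^2}$; summing over $n\ge N_0$ and using $M\le N_0^2$ together with $\sup_{t\ge 2}t^4e^{-B(\log 2)^2t^2}<\infty$ bounds the right-hand side by a universal constant $C_0$. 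Since $u-g$ is holomorphic, $u$ is smooth; and because $A_n\ge 2$ makes $e^{-\varphi}$ non-integrable near $2^n$, finiteness of the weighted integral forces $u(2^n)=0$. Hence $f:=g-u$ is entire with $f(2^n)=g(2^n)$, which is (ii). For (i), for $\abs z=r$ I would estimate $\int_{B(z,1)}\abs u^2\le C_0\,e^{B\log^2(\abs z+C_2)}(1+\abs z^2)^2$ using $\varphi\le B\log^2(\abs z+C_1)$, and $\int_{B(z,1)}\abs g^2\le\pi M^2$, which vanishes for $\abs z\le 2^{N_0-1}-1$ and is otherwise $\le CN_0^4\le C'e^{\log^2(\abs z+1)}$ (since $M\le N_0^2$ and $\abs z\gtrsim 2^{N_0-1}$); the sub–mean value inequality for the subharmonic function $\abs f^2$ then gives $\abs{f(z)}^2\le\pi^{-1}\int_{B(z,1)}\abs f^2\le Ce^{C\log^2(r+1)}$, i.e. $M_f(r)\le Ce^{C\log^2(r+1)}$ with $C$ depending only on the universal constants and the fixed choice of $B$.

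The main obstacle — and the only place the hypothesis $M\le(\min J)^2$ is genuinely used — is reconciling the three competing demands on the single weight $\varphi$: it must carry a logarithmic pole of strength at least $2$ at every $2^n$ (to kill $u(2^n)$), it must be large enough on the transition annuli that $\int\abs{\bar\partial g}^2e^{-\varphi}$ is summable with a bound independent of $J$ and $M$, and it must still grow only like $\log^2 r$. The pole strength $A_n\asymp r_n^2\inf_{B_n}\Delta\varphi_0$ is scale–invariant precisely because the puncture radii scale like $2^n$ — which is also why the annuli $A_n$ have width comparable to their radius and the disks stay disjoint — and the Gaussian-in-$n^2$ decay coming from $\varphi_0\asymp\log^2$ on $A_n$ defeats the polynomial factor $M^2\le(\min J)^4\le n^4$ exactly because all the data lives at scales $\ge 2^{\min J}$. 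Verifying the strict–subharmonicity estimate for $\varphi_0$ and the elementary geometry of the $B_n$'s and $A_n$'s is routine.
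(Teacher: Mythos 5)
Your proposal is correct and follows essentially the same route as the paper: the same $\log^2$-type weight, the same Puncture Lemma to plant logarithmic poles at the points $2^n$, the same application of H\"ormander's theorem, and the same use of $M\le(\min J)^2$ to make the weighted $\bar\partial$-data integral uniformly bounded. The only (minor, equally valid) variations are that you deduce $\alpha(2^n)=0$ from non-integrability of $e^{-\varphi}$ once the pole strength is at least $2$, where the paper instead runs an explicit Cauchy-integral estimate on shrinking disks and lets $\delta\to0$, and that your model function transitions on annuli rather than via bump functions supported on disks.
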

\begin{rmk}
	We stress that $C$ is independent of $J$ and $M$.
\end{rmk}

\begin{proof}
As mentioned in the introduction, we use Hörmander's Theorem (Theorem~\ref{thm:Hormander} below) to construct the ``dispatcher" function.  A crucial first step is the construction of an appropriate subharmonic function.

\subsubsection{Step 1: Constructing an appropriate subharmonic function.}
Denote by 
$$
u_0(z):=\begin{cases}
			C\log(\abs z),& \abs z<2\\
			\max\bset{C\log(\abs z),C\log^2(\abs z)},& \abs z\in\bb{2,3}\\
			C\log^2(\abs z),& \text{otherwise}
		\end{cases}
$$
where $C>1$ will be chose momentarily. Note that $u_0$ is subharmonic as a local maximum of subharmonic functions (along $\abs z=2$ the maximum is $C\log\abs z$ and along $\abs z=3$ the maximum is $C\log^2\abs z$). In addition, $u_0$ is a radial subharmonic function and for every $\abs z>3$
$$
\Delta u_0(z)=\Delta u_0(\abs z)={\frac{2C}{\abs z^2}}.
$$
Define the collection of pairwise disjoint disks, $B_k=B\bb{2^k,r_k}$ for $r_k=2^{k-3}$ and $k\in\N_{\ge 2}$, and note that $\bunion k 2\infty B_k\subset\bset{\abs z> 3}$.

We use the Puncture Lemma, Lemma \ref{lem:punctures}, with the function $u_0$ and the collection of disks $\bset{B_k}$ to construct a subharmonic function $u$ satisfying that outside $\bset{B_k}$ we have $u(z)=u_0(z)$ and for every $\delta\in\bb{0,1}$, and $k\in\N_{\ge 2}$
\begin{align}\label{eq:dispatcher}
 \underset{z\in B(2^k,\delta\cdot r_k)}\max\;u(z)&\le \underset{z\in B_k}\max\; u_0(z)-c\cdot r_k^2\cdot\underset{z\in B_k}\inf\; \Delta u_0\log\bb{\frac1\delta}\nonumber\\
 &\le C(k+1)^2-c\cdot 2^{2k-6}\cdot \frac{2C}{2^{2(k+1)}}\log\bb{\frac1\delta} \le 4Ck^2+{4}\log(\delta)
\end{align}
assuming $C\ge\frac{2^9}c$, where $c$ is the constant from the Puncture Lemma, Lemma~\ref{lem:punctures}.

\subsubsection{Step 2: The model map and the holomorphic approximation.}
Let $D_k:=B\bb{2^k, 2^{k-2}}$ and consider any (finite or infinite) subset $J\subset \mathbb{N}_{n\ge 2}$. We define the model map
$$
h(z):=M\cdot \underset{k\in J}\sum\; \indic{D_k}(z)
$$
Let $\chi:\C\rightarrow[0,1]$ be a smooth map satisfying
\begin{enumerate}
\item $\chi(w)=1$ whenever $w\in\bunion k 2\infty B\bb{2^k, \frac{1}{2}2^{k-2}}$.
\item $\chi(w)=0$ whenever $w\in\C\setminus\bunion k 2\infty D_k$.
\item There exists a uniform constant $A>1$ such that
$
\abs{\nabla\chi(w)}\le A\cdot\sumit k 2 \infty 2^{-k}\cdot\indic{D_k\setminus B\bb{2^k, \frac{1}{2}2^{k-2}}}(w).
$
\end{enumerate}
For a construction of such a function, see e.g. \cite[Proposition 2.5]{glucksam2024approximate}.

In order to find a holomorphic map approximating the model map $h$ along the sequence of points $\bset{2^k}_{k\in\N_{n\ge 2}}$ while also providing growth bounds, we use H\"ormander's theorem:
\begin{thm}\label{thm:Hormander}[H\"ormander, {\cite[Theorem 4.2.1]{hormander2007notions}}] 
Let $u:\C\rightarrow\R$ be a subharmonic function. Then, for every locally integrable function $g$ there is a solution $\alpha$ of the equation $\bar\partial \alpha=g$ such that:
\begin{equation}\label{eq:Hormander}
\iint_\C\abs {\alpha(z)}^2\frac{e^{-u(z)}}{\bb{1+\abs z^2}^{2}}d\lambda_2(z)\le\frac12\iint_\C\abs {g(z)}^2e^{-u(z)}d\lambda_2(z),
\end{equation}
provided that the integral in the right hand side is finite.
\end{thm}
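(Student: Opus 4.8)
This is the classical weighted $L^2$-existence theorem for $\bar\partial$ in one complex variable, and the plan is to prove it by Hörmander's Hilbert-space method. The first move is to absorb the polynomial factor into the weight: set $\psi(z):=u(z)+2\log(1+|z|^2)$, which — since $\partial_z\partial_{\bar z}\log(1+|z|^2)=(1+|z|^2)^{-2}$ and $u$ is subharmonic — is subharmonic with $\partial_z\partial_{\bar z}\psi\ge 2(1+|z|^2)^{-2}>0$. I would then work in the single Hilbert space $H:=L^2(\C,\ e^{-\psi}\,d\lambda_2)$, used both for functions and for $(0,1)$-forms via $f\mapsto f\,d\bar z$ with the pointwise norm $|f\,d\bar z|=|f|$, and let $T$ be the maximal realization of $\bar\partial$ on $H$ (its domain being the $\alpha\in H$ whose distributional $\bar\partial\alpha$ again lies in $H$); $T$ is densely defined and closed by routine arguments. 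Since there are no $(0,2)$-forms in dimension one, the datum $g\,d\bar z$ is automatically $\bar\partial$-closed, so the assertion is precisely that $g\in\mathrm{Range}(T)$ with control of the solution norm.

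The soft, functional-analytic half is the standard duality lemma: if $T$ is densely defined and closed and $|\langle g,v\rangle_H|^2\le C\,\|T^*v\|_H^2$ for all $v\in\mathrm{Dom}(T^*)$, then there is $\alpha\in H$ with $T\alpha=g$ and $\|\alpha\|_H^2\le C$. I would prove it in the usual way: the hypothesis forces $g\perp\ker T^*$, so $T^*v\mapsto\langle v,g\rangle_H$ is a well-defined linear functional on $\mathrm{Range}(T^*)$ of norm $\le\sqrt C$, and the Riesz representation theorem yields $\alpha\in H$ with $\langle T^*v,\alpha\rangle_H=\langle v,g\rangle_H$ for every $v\in\mathrm{Dom}(T^*)$, which says $\alpha\in\mathrm{Dom}(T^{**})=\mathrm{Dom}(T)$ and $T\alpha=g$. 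Everything thus reduces to producing a good constant $C$.

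That constant comes from the one-variable \emph{a priori} estimate. For $v\in C_c^\infty(\C)$ the formal adjoint is $T^*v=v\,\partial_z\psi-\partial_z v=-e^{\psi}\partial_z(e^{-\psi}v)$, and integration by parts (no boundary terms) gives the Bochner--Kodaira identity
\[
\|T^*v\|_H^2=\int_\C|\partial_{\bar z}v|^2e^{-\psi}\,d\lambda_2+\int_\C|v|^2\,(\partial_z\partial_{\bar z}\psi)\,e^{-\psi}\,d\lambda_2\ \ge\ \int_\C|v|^2\,(\partial_z\partial_{\bar z}\psi)\,e^{-\psi}\,d\lambda_2,
\]
with no ``twisting'' term because $\bar\partial$ has a single component in dimension one. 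After upgrading this to all of $\mathrm{Dom}(T^*)$ by a cutoff-and-mollification density argument and pairing it with Cauchy--Schwarz, one gets $|\langle g,v\rangle_H|^2\le\bigl(\int_\C|g|^2(\partial_z\partial_{\bar z}\psi)^{-1}e^{-\psi}\,d\lambda_2\bigr)\|T^*v\|_H^2$, so the duality lemma furnishes $\alpha$ with $\bar\partial\alpha=g$ and $\|\alpha\|_H^2\le\int_\C|g|^2(\partial_z\partial_{\bar z}\psi)^{-1}e^{-\psi}\,d\lambda_2$. Unwinding $\psi$, the estimates $(\partial_z\partial_{\bar z}\psi)^{-1}\le\frac12(1+|z|^2)^2$ and $e^{-\psi}=e^{-u}(1+|z|^2)^{-2}$ turn the right-hand side into $\frac12\int_\C|g|^2e^{-u}\,d\lambda_2$ and $\|\alpha\|_H^2$ into $\int_\C|\alpha|^2e^{-u}(1+|z|^2)^{-2}\,d\lambda_2$ --- exactly \eqref{eq:Hormander}.

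Finally I would remove the smoothness of $u$: approximate $u$ from above by smooth subharmonic $u_j\downarrow u$, apply the above to each (the auxiliary weight $\psi_j$ is automatically strictly subharmonic), and pass to a weak $L^2_{\mathrm{loc}}$ limit of the solutions $\alpha_j$, which are locally $L^2$-bounded because the $u_j$ are locally uniformly bounded above; the limit still solves $\bar\partial\alpha=g$ and keeps \eqref{eq:Hormander} by monotone convergence on the right and weak lower semicontinuity on the left. I expect the genuine obstacle to be this a priori estimate together with its technical underpinnings --- justifying the integration by parts when the weight's Laplacian is only a measure, and establishing graph-norm density of $C_c^\infty$-forms in $\mathrm{Dom}(T^*)$ --- whereas the Hilbert-space duality and the weight bookkeeping are routine.
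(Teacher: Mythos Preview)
The paper does not prove this theorem: it is quoted verbatim from H\"ormander's book and used as a black box in the construction of the dispatcher function. There is therefore no ``paper's own proof'' to compare against.

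That said, your outline is correct and is precisely the argument in H\"ormander's text (which the paper cites). The trick of folding the factor $(1+|z|^2)^{-2}$ into the weight via $\psi=u+2\log(1+|z|^2)$, the one-variable Bochner--Kodaira identity $\|T^*v\|^2=\int|\partial_{\bar z}v|^2e^{-\psi}+\int|v|^2(\partial_z\partial_{\bar z}\psi)e^{-\psi}$, the functional-analytic duality giving $\|\alpha\|_H^2\le\int|g|^2(\partial_z\partial_{\bar z}\psi)^{-1}e^{-\psi}$, and the final bookkeeping $(\partial_z\partial_{\bar z}\psi)^{-1}e^{-\psi}\le\tfrac12 e^{-u}$ are all exactly as in the standard source. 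Your identification of the density of $C_c^\infty$ in $\mathrm{Dom}(T^*)$ (for the graph norm) and the regularization $u_j\downarrow u$ as the two genuinely technical points is also accurate; both are handled in H\"ormander's book and need no modification here.
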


We define the function $g(z):=\overline\partial\chi(z)\cdot h(z)=\overline\partial\bb{\chi\cdot h}(z)$, which is supported in $\bigcup_{k\in J} \bb{D_k\setminus B\bb{2^k, \frac{1}{2}2^{k-2}}}$. Note that $u(z)=u_0(z)=C\log^2(|z|)$ on the support of $g$.
To apply H\"ormander's theorem, we bound the following integral:
\begin{align}\label{eq:integral}
&\iint_\C \abs{g(z)}^2\cdot e^{-u(z)}d\lambda_2(z)=\iint_{\bigcup_{k\in J} \bb{D_k\setminus B\bb{2^k, 2^{k-3}}}} \abs{\overline\partial\chi(z)}^2\abs{h(z)}^2\cdot e^{-u(z)}d\lambda_2(z)\nonumber\\
&=\iint_{\bigcup_{k\in J} \bb{D_k\setminus B\bb{2^k, 2^{k-3}}}} \abs{\overline\partial\chi(z)}^2\abs{h(z)}^2\cdot e^{-u_0(z)}d\lambda_2(z)\nonumber\\
&\le A^2\sum_{k\in J}\  \iint_{D_k\setminus B\bb{2^k, 2^{k-3}}} 2^{-2k}M^2 e^{- C(k-1)^2}\le C_1^2\sumit k {1}\infty k^4e^{-C(k-1)^2}:=I<\infty,
\end{align}
since $M\le (\min J)^2$, while if $k\nin J$, the model map satisfies $h|_{D_k}=0$. It is important to note that $I$ is a numerical number which does \underline{NOT} depend on $M$ or on the set $J$.

We apply H\"ormander's theorem with the map $g$ and the subharmonic function $u$, constructed in Step 1, to obtain a function $\alpha:\C\rightarrow\C$ satisfying $\dbar\alpha= g$ and estimate (\ref{eq:Hormander}), and define
$$
f(z)=\chi(z)\cdot h(z)-\alpha(z).
$$
It is not hard to see that $f$ is an entire function, as a solution to the $\bar\partial$ equation.

\subsubsection{Step 3: Bounding the error.}
We will show that for every $k\in\N_{n\ge 2}$ the function, $f$, constructed above, agrees with the model map, $h$, at $2^k$.

Fix $k\in\N_{n\ge 2}$ and note that for every $\delta\in\bb{0,\frac12}$, the function $\chi$ satisfies $\left.\chi\right|_{B\bb{2^k,\delta \cdot 2^{k-2}}}\equiv 1$. Using Cauchy's integral formula for the function $f-h$, which  on $B\bb{2^k,\delta \cdot 2^{k-2}}$ is both holomorphic and coincides with $\alpha$, we obtain that up to a uniform constant,
{\begin{align*}
\abs{f\bb{2^k}-h\bb{2^k}}&=\abs{\alpha\bb{2^k}}\lesssim\frac1{\bb{\delta\cdot 2^{k-2}}^2}\underset{B\bb{2^k,\delta\cdot 2^{k-2}}}\iint |\alpha(w)|d\lambda_2(w)=\frac1{\bb{\delta\cdot 2^{k-2}}^2}\sqrt{\abs{\underset{B\bb{2^k,\delta\cdot 2^{k-2}}}\iint |\alpha(w)|\cdot\overline{\bf 1}d\lambda_2(w)}^2}\\
&\lesssim \frac1{\delta^2\cdot 2^{2k}}\sqrt{\underset{B\bb{2^k,\delta\cdot 2^{k-2}}}\iint \abs{\alpha(w)}^2d\lambda_2(w)\cdot \underset{B\bb{2^k,\delta\cdot 2^{k-2}}}\iint \abs{{\bf 1}(w)}^2d\lambda_2(w)}\\
&\lesssim \frac1{\delta\cdot 2^{k}}\underset{w\in B\bb{2^k,\delta\cdot 2^{k-2}}}\max\; \bb{1+\abs w^2}e^{\frac12u(w)}\sqrt{\iint_\C\abs {\alpha(w)}^2\frac{e^{-u(w)}}{\bb{1+\abs w^2}^{2}}d\lambda_2(w)},
\end{align*}}
where the second inequality follows from Cauchy-Schwartz inequality. Using inequality \eqref{eq:Hormander} from Hormander's Theorem, the above integral can be bounded by the integral of $|g|^2e^{-u}$, which we estimated in \eqref{eq:integral}. Combining these bounds with \eqref{eq:dispatcher}, we obtain 
\begin{align*}
\abs{f\bb{2^k}-h\bb{2^k}}\lesssim &\frac1{\bb{\delta\cdot 2^{k}}}\cdot 2\cdot 2^{2(k+1)}\cdot \exp\bb{ \frac{1}{2}\bb{4Ck^2+4\log(\delta)}}\cdot I\\
\lesssim&\exp\bb{-\log(\delta)+2Ck^2+2\log(\delta)}\le e^{2Ck^2}\cdot{\delta}\underset{\delta\rightarrow0}\longrightarrow 0.
\end{align*}
We conclude that $f\bb{2^k}=h\bb{2^k}$, i.e., property \ref{itm:values} holds.

\subsubsection{Step 4: Bounding the growth.}
To see that property \ref{itm:growth} holds, we use a similar calculation noting that $\abs{h(z)}\le4\log^2\bb{\abs z+1}$, since if $z\nin\underset{k\in J}\bigcup D_k$, then $\abs{h(z)}=0\le4\log^2\bb{\abs z+1}$ and if $z\in\underset{k\in J}\bigcup D_k$ then $\abs{h(z)}=M\le\bb{\underset{k\in J}\min\; k}^2\le 4\log^2\abs z$.

 This time, we apply Cauchy's integral formula to $f$ rather than $\alpha$, {as $\alpha$ is not necessarily holomorphic in these disks. We obtain}
\begin{align*}
\abs{f(z)}&\leq\abs{\chi(z)h(z)-\alpha(z)}=\frac1{\pi}\underset{B\bb{z,1}}\iint |\chi(w)h(w)-\alpha(w)|d\lambda_2(w)\leq \frac{1}{\pi}\underset{B\bb{z,1}}\iint |\chi(w)h(w)|+|\alpha(w)| d\lambda_2(w)\\
&\le \underset{w\in B(z,1)}\sup\abs {h(w)}+\frac1\pi\sqrt{\underset{B\bb{z,1}}\iint |\alpha(w)|^2d\lambda_2(w)}\\
&\lesssim \log^2\bb{\abs z+1}+\underset{w\in B\bb{z,1}}\max\; \bb{1+\abs w^2}e^{\frac12u(w)}\sqrt{\iint_\C\abs {\alpha(w)}^2\frac{e^{-u(w)}}{\bb{1+\abs w^2}^{2}}d\lambda_2(w)}\\
&\lesssim \log^2(\abs z+1)+\bb{2+\abs z^2}e^{\frac C2\log^2(\abs z+1)}\cdot I,
\end{align*}
where we used the fact that $u(w)\leq u_0(w)=C\log^2(|w|)$. Overall, we see that 
$$\abs{f(z)}\lesssim   e^{C\log^2(\abs z+1)},$$
concluding the proof of Lemma~\ref{lem:dispatcher}.
\end{proof}
\subsection{Modifying Cornalba-Shiffman's construction} \label{subsec:Cornalba_Shiffman}
Before constructing an entire function with many primitive periodic points of different periods, let us explain a simple way to construct an entire function with many p-primitive periodic points for a single period, $p$. Roughly speaking, the construction takes (a symmetrized version of) the Cornalba-Shiffman's example and adds to it a standard rotation by $\frac{2\pi}p$, formally defined by $\Theta_p(z)=e^{\frac{2\pi i}p}z$, which rotates the plane by $\frac{2\pi}p$ counter clockwise. Zeros of Cornalba-Shiffman's construction then become $p$-primitive periodic points.
\begin{examp}\label{ex:single_period}
Fix $p\in\N$. Note that while $\Theta_p$ is a $p$-periodic function, its periodic points are not isolated.  Let $F(z,w):\C^2\rightarrow\C^2$ be any entire function of order zero satisfying that in every ball of radius $2^n$, $F$ has $m_n$ isolated zeros that are symmetric under the rotation $\Theta_p$:
$$
(z_j, w_j)\in \C^2,\ j=1,\dots,m_n\quad \text{such that}\quad  F \bb{\Theta_p^{\circ\ell}(z_j), w_j}=0 \quad\text{for all}\quad\ell=0,\dots, k-1.
$$
Here, and everywhere else, $\Theta_p^{\circ\ell}$ is the composition of $\Theta_p$ with itself $\ell$ times. One example of such function is a symmetrized version of the function constructed by Cornalba and Shiffman, which we explicitly synthesize in Lemma~\ref{lem:fancy_CS}. However, any function with isolated symmetric zeroes as above will do. Then
$$
F_p(z,w)=F(z,w)+\bb{\Theta_p(z),w}
$$
is a holomorphic function of order zero satisfying that in every ball of radius $2^n$ it has $m_n$ points which are $p$-PPP.
\end{examp}
We would like to modify this construction, to construct functions with primitive periodic points of varying periods. For this we will use the dispatcher function constructed in the previous subsection. Our main goal for this section is to complete the proof of Theorem~\ref{thm:PPP_all_rate}, which states that for every sequence of periods, $\bset{p_n}_{n=1}^\infty$, and for every rate, $\bset{m_n}_{n=1}^\infty$, there exists a holomorphic function $F:\C^2\rightarrow\C^2$ satisfying 
$$
\nu_{p_n}(F,2^n)\ge m_n\quad\text{and}\quad \log M_F(r) = O ((\log r)^2).
$$
\begin{rmk}\label{rmk:index}
To simplify our notation we will assume our sequences start from $n=2$. As the sequences are arbitrary, this does not limit the generality of the result.
\end{rmk}
\begin{figure}
	\centering
	\includegraphics[scale=0.8]{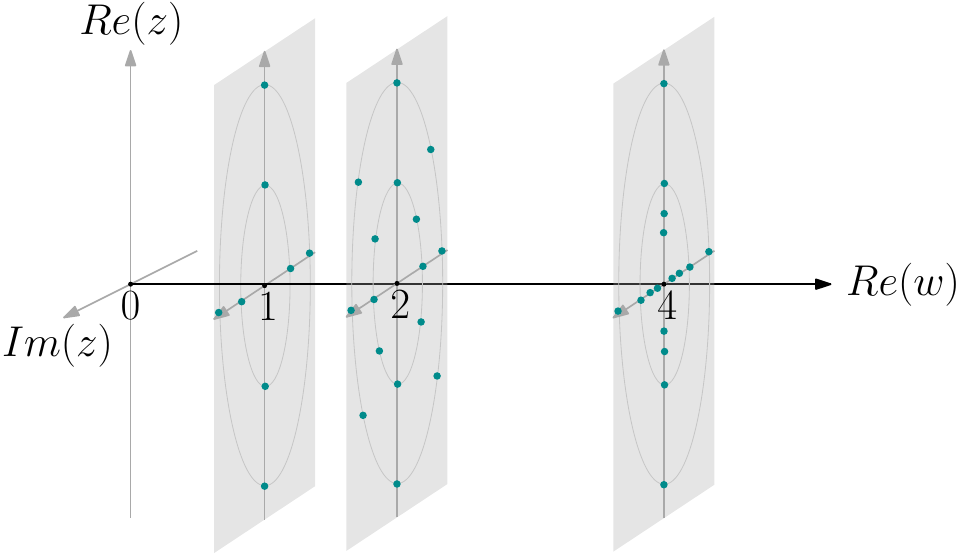}
	\caption{An illustration of the zeroes of a symmetrized Cornalba--Shiffman function, for periods $\{p_n\}=\{4,8,4,...\}$ and rates $\{m_n\}=\{2,2,4,...\}$.}
	\label{fig:Symm_CS}
\end{figure}
We start by constructing a ``symmetrized" version of the Cornalba--Shiffman function, whose zero set is depicted in Figure~\ref{fig:Symm_CS}. 

\begin{lem}\label{lem:fancy_CS}
Given a sequence of periods, $\bset{p_n}$, and a rate, $\bset{m_n}$, there exists a holomorphic function $G=\bb{g_1,g_2}:\C^2\rightarrow\C^2$ of order zero satisfying
\begin{itemize}
\item  The set of isolated zeros of $G$ contains the set
$$
Z:=\bunion n 1\infty Z_n:=\bunion n 1\infty\bset{\bb{\frac{e^{\frac{2\pi i \ell}{p_n}}}{j} ,2^n} :\ 1\le j\le m_n, 0\le\ell\le p_n-1}.
$$
\item $\left.\frac d{dz}g_1\right|_{Z}\neq 0$ and satisfies
$$
\frac d{dz}g_1\bb{\Theta_{p_n}^{\circ\ell}\bb{\frac1j},2^n}=\Theta_{p_n}^{\circ\bb{-\ell}}\bb{\frac d{dz}g_1\bb{\frac1j,2^n}}.
$$
\item $\left.\frac d{dw}g_2\right|_{Z}\neq0$, and $\frac d{dz}g_2\equiv 0$, i.e., $g_2$ does not depend on $z$.
\item There exists a uniform constant $C>1$ (independent of the sequences $\bset{m_n},\bset{p_n}$) such that
$$
\log M_G(R)\le 40\log^2(R+1)+C.
$$
\end{itemize}
\end{lem}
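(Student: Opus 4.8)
The plan is to build $G=(g_1,g_2)$ using a Weierstrass product for $g_2$ together with a Lagrange-type interpolation for $g_1$, in a way that decouples the two coordinates: $g_2$ will depend only on $w$ and cut out the horizontal hyperplanes $\bset{w=2^n}$, while $g_1$ will restrict on each such hyperplane to a $\Theta_{p_n}$-symmetric polynomial in $z$ with exactly the prescribed roots. Concretely, I would take $g_2(z,w):=\phi(w):=\prod_{m\ge 2}\bb{1-w/2^m}$; the product converges locally uniformly since $\sum 2^{-m}<\infty$, $\phi$ is entire with simple zeros precisely at $\bset{2^m}_{m\ge 2}$, and splitting the factors at $2^m\approx R$ yields $\log M_\phi(R)\le 2\log^2(R+1)+C_0$ for an absolute constant $C_0$. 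Hence $\partial_w g_2$ equals $\phi'(2^n)\ne 0$ along $\bset{w=2^n}$ and $\partial_z g_2\equiv 0$, which is already the third bullet. For the interpolation I would use $L_n(w):=\frac{\phi(w)}{\phi'(2^n)(w-2^n)}=\frac{\prod_{m\ne n}(1-w/2^m)}{\prod_{m\ne n}(1-2^{n-m})}$, which is entire, satisfies $L_n(2^m)=\delta_{nm}$, and has denominator bounded below by $c_0:=\prod_{k\ge 1}(1-2^{-k})>0$ uniformly in $n$, so that $\log M_{L_n}(R)\le 2\log^2(R+1)+C_0+\log(1/c_0)$ with constants independent of $n$ and of the sequences.

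For the $z$-direction I would set $r_n(z):=\prod_{j=1}^{m_n}\bb{z^{p_n}-j^{-p_n}}$, a polynomial of degree $d_n:=m_n p_n$ whose roots are exactly the $z$-coordinates of $Z_n$, all simple; since it is a polynomial in $z^{p_n}$ it satisfies $r_n\circ\Theta_{p_n}=r_n$, and differentiating this identity $\ell$ times gives $r_n'\bb{\Theta_{p_n}^{\circ\ell}(z)}=\Theta_{p_n}^{\circ(-\ell)}\bb{r_n'(z)}$. Then I would choose $\epsilon_n:=2^{-m_n}e^{-d_n^2-n}$ and define $g_1(z,w):=\sum_{n\ge 2}\epsilon_n\,r_n(z)\,L_n(w)$ and $G:=(g_1,g_2)$. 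The series converges locally uniformly, so $g_1$ is entire: for $R\ge 1$ one has $M_{r_n}(R)\le 2^{m_n}R^{d_n}$, and the elementary bound $d_n\log R\le d_n^2+\frac{1}{4}\log^2 R$ gives $\epsilon_n M_{r_n}(R)M_{L_n}(R)\le \frac{e^{C_0}}{c_0}\,e^{-n}\,e^{\frac{9}{4}\log^2(R+1)}$, which is summable in $n$ (for $R<1$ all quantities are bounded). Consequently $\log M_{g_1}(R)\le \frac{9}{4}\log^2(R+1)+C_1$; combining with the estimate for $\phi$ and $M_G(R)\le M_{g_1}(R)+M_{g_2}(R)$ yields $\log M_G(R)\le 40\log^2(R+1)+C$ with $C$ independent of $\bset{p_n},\bset{m_n}$, and in particular $G$ has order zero.

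I expect the growth estimate of the previous paragraph to be the main obstacle, the difficulty being to make it uniform in the two arbitrary sequences even though the degrees $d_n$ and the periods $p_n$ are unbounded. The point that resolves it is to normalise each $r_n$ by the tiny constant $\epsilon_n$, so that a degree-$d_n$ horizontal slice contributes at most $e^{d_n\log R-d_n^2}\le e^{\frac{1}{4}\log^2 R}$ at radius $R$ — a quantity that is simultaneously summable in $n$ and of order zero in $R$. (The value $40$ is generous; the construction actually gives a much smaller coefficient, but there is no need to optimise it.)

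It remains to read off the zero and derivative conditions. Since $L_n(2^m)=\delta_{nm}$, on the hyperplane $\bset{w=2^n}$ we have $g_1(z,2^n)=\epsilon_n r_n(z)$, a nonzero polynomial vanishing exactly, and to first order, at the $z$-coordinates of $Z_n$; hence every point of $Z_n$ is a zero of $G=(g_1,g_2)$. Because $\bset{g_2=0}$ is the disjoint union of the hyperplanes $\bset{w=2^n}$ and $g_1$ restricts to a nonzero polynomial in $z$ on each of them, the common zero set of $g_1$ and $g_2$ is exactly $Z=\bigcup_{n\ge 2} Z_n$ and each of its points is isolated, which is the first bullet. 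Finally $\partial_z g_1(z,2^n)=\epsilon_n r_n'(z)$ is nonzero at the (simple) roots of $r_n$, and by the equivariance of $r_n'$ recorded above together with the linearity of $\Theta_{p_n}$ it satisfies $\partial_z g_1\bb{\Theta_{p_n}^{\circ\ell}(1/j),2^n}=\Theta_{p_n}^{\circ(-\ell)}\bb{\partial_z g_1(1/j,2^n)}$, which is the second bullet; the third was already checked above.
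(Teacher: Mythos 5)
Your construction is essentially the paper's: $g_2$ is the same Weierstrass-type product with zeros at the $2^n$, your $L_n$ are just normalized versions of the paper's $Q_n=Q/(1-w/2^n)$, your $r_n$ coincides with the paper's $P_n$, and the coefficients $\epsilon_n\approx e^{-\deg(r_n)^2}$ play exactly the role of the paper's $2^{-\ell_n}$ with $\ell_n>\deg(P_n)^2$. The proof is correct; if anything, writing $r_n$ as a polynomial in $z^{p_n}$ makes the derivative equivariance cleaner than the paper's direct product manipulation, and you use the correct degree $m_np_n$ where the paper writes $m_n+p_n$.
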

\begin{proof}
Similarly to Carnalba-Shiffman's original construction, let 
$$
Q(w):=\prodit j 1\infty\bb{1-\frac w{2^j}}\quad,\quad\quad Q_n(w):=\underset{j\neq n}\prod\bb{1-\frac w{2^j}}=\frac{Q(w)}{1-\frac w{2^n}},
$$
and for every $n$ consider the following polynomial of degree $(m_n+p_n)$:
$$
P_n(z)=\prodit j 1 {m_n}\prodit \ell 0 {p_n-1}\bb{z-\frac{e^{\frac{2\pi i\ell}{p_n}}}{j}}=\prodit j 1 {m_n}\prodit \ell 0 {p_n-1}\bb{z-z_{j,\ell}}.
$$
where $z_{j,\ell}:=\frac{e^{\frac{2\pi i\ell}{p_n}}}{j}$, are the zeros of $P_n$.

Define the function
$$
G(z,w):=\bb{g_1(z,w),g_2(z,w)}=\bb{\sumit n 1\infty 2^{-\ell_n}\cdot Q_n(w)\cdot P_n(z),\ Q(w)},
$$
where the sequence $\bset{\ell_n}\nearrow\infty$ will be chosen throughout the proof so that $G$ satisfies the assertions of the lemma.

\noindent\textbf{Derivatives and isolation of zeros:} Starting with the requirement on the derivatives, it is clear that $g_2(z,w)=Q(w)$ does not depend on $z$. In addition, the derivative of $g_2$ with respect to $w$ is non-vanishing on $Z$ since $\frac{d}{dw}g_2(z,w)=Q'(w)$ and 
$$
\left.Q'(w)\right|_{w=2^m}=\left.\underset{j}\sum2^{-j} \underset{\nu\neq j}\prod \bb{1-\frac w{2^\nu}}\right|_{w=2^m}=2^{-m}\underset{\nu\neq m}\prod \bb{1-2^{m-\nu}} \neq 0.
$$
The estimate of the derivative of $g_1$ with respect to $z$ requires a computation of the derivatives of $P_n$, using  the symmetry of the set $\bset{e^{\frac{2\pi j}{p_n}}}_{\ell=0}^{p_n-1}$:
\begin{align*}
P_n'\bb{z_{j,\ell}}&=\limit z{z_{j,\ell}}\frac{P_n(z)}{z-z_{j,\ell}}
=\underset{\bb{\nu,k)\neq(j,\ell}}\prod\bb{\frac{e^{\frac{2\pi i\ell}{p_n}}}{j}-\frac{e^{\frac{2\pi ik}{p_n}}}{\nu}}\\
&=\bb{e^{\frac{2\pi i\ell}{p_n}}}^{p_n\cdot m_n-1}\underset{\bb{\nu,k)\neq(j,\ell}}\prod\bb{\frac{1}{j}-\frac{e^{\frac{2\pi i\bb{k-\ell}}{p_n}}}{\nu}}=e^{-\frac{2\pi i\ell}{p_n}}\underset{\bb{\nu,k)\neq(j,\ell}}\prod\bb{\frac{1}{j}-\frac{e^{\frac{2\pi i\bb{k-\ell}}{p_n}}}{\nu}}.
\end{align*}
Let us related the derivatives at $z_{j,\ell}$ and $z_{j,0}$. Shifting the product by $e^{\frac{2\pi i\ell}{p_n}}$ we see that if we fix $\nu\neq j$, the product over $k$ is taken over the entire set $\bset{0,1,\cdots,p_n-1}$ (shifted by $2\pi\ell$). On the other hand, if $\nu=j$ the product that was taken over the set $\bset{0,1,2,\cdots,p_n-1}\setminus\bset\ell$ shifted is the product taken over the set $\bset{1,2,\cdots,p_n-1}$, which is the derivative at  $z_{j,0}$ . We see that
$$
P_n'\bb{z_{j,\ell}}=e^{-\frac{2\pi i\ell}{p_n}}\underset{\bb{\nu,k)\neq(j,0}}\prod\bb{\frac{1}{j}-\frac{e^{\frac{2\pi {k}}{p_n}}}{\nu}}=e^{-\frac{2\pi i\ell}{p_n}}P_n'\bb{z_{j,0}}=\Theta_{p_n}^{\circ\bb{-\ell}}\bb{P_n'\bb{\frac1j}},
$$
implying that
\begin{align*}
\frac d{dz}g_1\bb{\Theta_{p_n}^{\circ\ell}\bb{\frac1j},2^n}&=2^{-\ell_n}\cdot Q(2^n)\cdot P_n'\bb{\Theta_{p_n}^{\circ\ell}\bb{\frac1j}}\\
&=2^{-\ell_n}\cdot Q(2^n)\cdot \Theta_{p_n}^{\circ\bb{-\ell}}\bb{P_n'\bb{\frac1j}}=\Theta_{p_n}^{\circ\bb{-\ell}}\bb{\frac d{dz}g_1\bb{\frac1j,2^n}}.
\end{align*}

\noindent\textbf{Growth bound:} To conclude the proof, we will show that the sequence $\bset{\ell_n}$ can be chosen so that $\log M_G(R)\le 40\log^2(R+1)+C$ for some uniform constant $C$.

We first bound the growth of the functions $Q_n$ independently of $n$. Fix $w\in \C$, and let $k$ be so such that $\abs w\in\left[2^k,2^{k+1}\right)$. Then, 
\begin{align}
\abs{Q_n(w)}&=\abs{\exp\bb{\log\bb{\underset{j\neq n}\prod\bb{1-\frac w{2^j}}}}}\le \exp\bb{\underset{j\neq n}\sum\log\bb{1+\frac{\abs w}{2^j}}}\nonumber\\
&=\exp\bb{\sumit j 1 {k+1}\log\bb{1+\frac{\abs w}{2^j}}}\cdot \exp\bb{\sumit j  {k+2}\infty\log\bb{1+\frac{\abs w}{2^j}}}\le \exp\bb{(k+1)\log\bb{2\abs w}}\cdot \exp\bb{\sumit j  {k+2}\infty\frac{\abs w}{2^j}} \nonumber\\
&\le \exp\bb{2\log^2\abs w}\cdot\exp\bb{2^{-(k+1)}\abs w} \nonumber\le \exp\bb{2\log^2\abs w+1}.
\end{align}
The same computation shows that $|Q(w)|\leq \exp\bb{2\log^2\abs w+1}$ as well, which concludes the growth bound for $g_2$.

Next we bound the growth of $g_1$. The degree of the polynomial $P_n$ is $(m_n+p_n)$, and it is bounded by
$$
\abs{P_n(z)} = \abs{\prodit j 1 {m_n}\prodit \ell 0 {p_n-1}\bb{z-z_{j,\ell}}} = \prodit j 1 {m_n}\prodit \ell 0 {p_n-1}\abs{{z-z_{j,\ell}}} \leq \prodit j 1 {m_n}\prodit \ell 0 {p_n-1}\bb{\abs z+1} = \bb{\abs z+1}^{m_n+p_n}.
$$
For every $k\ge 2$ consider  $(z,w)$ with $\sqrt{\abs z^2+\abs w^2}\in\sbb{2^{k-1},2^k}$. For simplicity of the arguments below, we assume that the sequence $\deg\bb{P_n}=m_n+p_n$ is strictly increasing. Note that this can be achieved by increasing $m_n$, which does not change the statement of the lemma, as $m_n$ is the number of $p_n$-PPP, increasing it will generate more points than required. Define $\mu_k:=\min\bset{n: deg\bb{P_n}\ge 2(k+1)}$. Then, since $\abs z<2^k$,
\begin{align}\label{eq:two_sums}
\abs{g_1(z,w)}&=\abs{\sumit n 1\infty 2^{-\ell_n}Q_n(w)\cdot P_n(z)}\le \underset{n\in\N}\sup\abs{Q_n(w)}\sumit n 1\infty 2^{-\ell_n} \bb{1+\abs z}^{m_n+p_n}\nonumber\\
&\le \exp\bb{2\log^2\abs z+1}\bb{\sumit n 1{\mu_k-1} 2^{\bb{k+1}\bb{m_n+p_n}-\ell_n}+\sumit n {\mu_k}\infty 2^{\bb{k+1}\bb{m_n+p_n}-\ell_n}}.
\end{align}
We shall bound each sum separately. 

To bound the first summand in (\ref{eq:two_sums}), note that we may assume without loss of generality that $\mu_k\le k$ (this happens whenever the sequence $m_n$ grows fast enough) and that $\ell_n> deg(P_n)^2=\bb{m_n+p_n}^2$. Then
\begin{align*}
 \sumit n 1{\mu_k-1} 2^{-\ell_n + (k+1)\cdot (m_n+p_n)} &\le \sumit n 1{\mu_k-1} 2^{-\bb{m_n+p_n}^2  + (k+1)\cdot (m_n+p_n)}\le \mu_k\cdot 2^{2(k+1)^2}\le \exp\bb{2\log(k)+8\cdot k^2}\\
&\le \exp\bb{9 k^2}\le \exp\bb{36\log^2\bb{\sqrt{\abs z^2+\abs w^2}}},
\end{align*}
where in the first inequality we used our choice of $\ell_n>\bb{m_n+p_n}^2$, and in the second, the fact that for $n\leq \mu_k$, $m_n+p_n=deg(P_n)\le 2(k+1)$.

To bound the second sum in (\ref{eq:two_sums}), recall that by the way $\mu_k$ was chosen for every $n\geq \mu_k$, 
$$
m_n+p_n=deg(P_n)\ge deg\bb{P_{\mu_k}}\ge 2\bb{k+1}
$$
implying that
$$
\sumit n {\mu_k}\infty 2^{\bb{k+1}\bb{m_n+p_n}-\ell_n}\leq \sumit n {\mu_k}\infty 2^{-\bb{m_n+p_n}^2\bb{1-\frac{k+1}{m_n+p_n}}}\le \sumit n {\mu_k}\infty 2^{-\frac12\bb{m_n+p_n}^2}<\sumit k 1\infty 2^{-\frac12k^2}<1.
$$


Combining the two estimates together, we see that 
$$
\log M_{g_1}(R)\le 3\log^2(R+1)+36\log^2(R+1)+C\le 39\log^2(R+1)+C,
$$
and thus the desired bound holds for $G$ as well, concluding our proof.
\end{proof}
We will use the lemma above to conclude the construction of a holomorphic map on $\C^2$ with prescribed PPP's for any sequences of periods and any rate.
\begin{proof}[Proof of Theorem \ref{thm:PPP_all_rate}]
In light of remark \ref{rmk:index}, for every $m\ge 2$ we let $D_m:\C\rightarrow\C$ be a dispatcher function created by the Dispatcher Lemma, Lemma \ref{lem:dispatcher}, with $J=\bset{m}$, a singleton, and the constant $M=m^2$. Recall that $D_m$ is a holomorphic function satisfying
$$
D_m(2^n)=\begin{cases}
			m^2&, n=m\\
			0&, \text{otherwise}
		\end{cases}.
$$
We define the function
$$
F(z,w)=G(z,w) + \bb{\sumit m 2\infty \frac1{m^2}D_m(w)\cdot \Theta_{p_m}(z),w},
$$
where $G(z,w)$ is the modification of Cornalba-Shiffman's construction constructed in Lemma~\ref{lem:fancy_CS} above.

We need to show that $F$ has at least $m_n$ isolated $p_n$-PPP in every ball $B(0,2^n+1)$ and that it satisfies the correct growth rate.
As in Lemma~\ref{lem:fancy_CS}, we define the sets
$$
Z_n:=\bset{\bb{\frac{e^{\frac{2\pi i \ell}{p_n}}}{j} ,2^n} :\ 1\le j\le m_n, 0\le\ell\le p_n-1}\subset B(0,2^n+1).
$$
\noindent{\bf Observation 1:} $F|_{Z_n}\bb{z,2^n}=\bb{\Theta_{p_n}(z),2^n}$ and every point in $Z_n$ is a $p_n$-PPP.

Indeed, note that
$$
G|_{Z_n}\equiv 0\quad\quad,\quad\quad\left.\bb{\sumit m 2\infty \frac1{m^2}D_m}\right|_{Z_n}=\frac1{n^2}\cdot D_n(2^n)=1,
$$
implying that
$$
F|_{Z_n}= G|_{Z_n} + \bb{\left.\bb{\sumit m 2\infty \frac1{m^2}D_m(2^n)\cdot \Theta_{p_m}(\cdot)}\right|_{Z_n},2^n}=\bb{\Theta_{p_n}(\cdot),2^n}.
$$
In particular, for every $\bb{z,2^n}\in Z_n$ we have
$$
F^{\circ k}\bb{z,2^n}=F^{\circ (k-1)}\bb{\Theta_{p_n}\bb{z},2^n}=\cdots=\bb{\Theta_{p_n}^{\circ k}\bb{z},2^n}=\bb{z,2^n}\iff k=p_n\cdot \nu\quad,\quad\nu\in\N,
$$
concluding the proof of observation 1.

\noindent{\bf Observation 2:} The points in $Z_n$ are isolated $p_n$-PPP.

Indeed, recall that for every $f:\C\rightarrow\C$, 
$$
\frac d{dz}\bb{f^{\circ {p_n}}}=\prodit k 1 {p_n} \frac d{dz}f\bb{f^{\circ(k-1)}}.
$$
In addition, following Observation 1, $F|_{Z_n}\bb{z,2^n}=\bb{\Theta_{p_n}(z),2^n}$, implying that for every $k$ we have
$$
F^{\circ(k-1)}(z,2^n)=\bb{\Theta_{p_n}^{\circ(k-1)}z,2^n}=\bb{e^{\frac{2\pi i(k-1)}{p_n}}z,2^n}.
$$
We write 
$$
F(z,w)=(f_1(z,w),f_2(z,w))=\bb{g_1(z,w)+\sumit m 1\infty \frac1{m^2}D_m(w)\cdot \Theta_{p_m}(z),g_2(z,w)+w}.
$$
Fix $\bb{\frac1j,2^n}\in Z_n$, we let $z_k=\Theta_{p_n}^{\circ k}\bb{\frac1j}$, then $\bb{z_k,2^n}\in Z_n$ as well and as $\frac d{dz}g_2\equiv 0$,
\begin{align*}
\frac d{dz}\bb{F^{\circ p_n}}\bb{\frac1j,2^n}&=\bb{\prodit k 1 {p_n} \frac d{dz}f_1\bb{F^{\circ(k-1)}\bb{\frac1j,2^n}},\prodit k 1 {p_n} \frac d{dz}f_2\bb{F^{\circ(k-1)}\bb{\frac1j,2^n}}}\\
&=\bb{\prodit k 1 {p_n} \bb{\frac d{dz}g_1\bb{z_{k-1},2^n}+\sumit m 1\infty \frac1{m^2}D_m(2^n)\cdot \frac d{dz}\Theta_{p_m}\bb{z_{k-1}}},\prodit k 1 {p_n} \frac d{dz}g_2\bb{z_{k-1},2^n}}\\
&=\bb{\prodit k 1 {p_n}  \bb{\frac d{dz}g_1\bb{z_{k-1},2^n}+e^{\frac{2\pi i}{p_n}}},0},\\
\frac d{dw}\bb{F^{\circ p_n}}\bb{\frac1j,2^n}&= \bb{\prodit k 1 {p_n} \bb{\frac d{dw}g_1\bb{z_{k-1},2^n}+\sumit m 1\infty \frac1{m^2}\frac d{dw}D_m(2^n)z_{k-1}},1+\frac d{dw}g_2\bb{z_{k-1},2^n}}.
\end{align*}

If $dF^{\circ{p_n}} - \operatorname{Id}$ is degenerate at $\bb{\frac1j,2^n}\in Z_n$, then
\begin{align*}
0=det\bb{\frac d{dz} F^{\circ p_n}-Id,\frac d{dw} F^{\circ p_n}-Id}&=\bb{\prodit k 1 {p_n}  \bb{\frac d{dz}g_1\bb{z_{k-1},2^n}+e^{\frac{2\pi i}{p_n}}}-1}\bb{1+\frac d{dw}g_2\bb{z_{k-1},2^n}-1}\\
&=\frac d{dw}g_2\bb{z_{k-1},2^n}\bb{\prodit k 1 {p_n}  \bb{\frac d{dz}g_1\bb{z_{k-1},2^n}+e^{\frac{2\pi i}{p_n}}}-1}\\
&\iff  \prodit k 1 {p_n}  \bb{\frac d{dz}g_1\bb{z_{k-1},2^n}+e^{\frac{2\pi i}{p_n}}}=1\text{ or } \frac d{dw}g_2\bb{z_{k-1},2^n}=0.
\end{align*}
However, on one hand, $\left.\frac d{dw}g_2\right|_{Z}\neq0$ by Lemma~\ref{lem:fancy_CS}. On the other hand, following the same lemma, if we denote by
$$
a:=\frac d{dz}g_1\bb{\frac1j,2^n},
$$
then for every $k\ge 2$,
$$
\frac d{dz}g_1\bb{z_{k-1},2^n}=\frac d{dz}g_1\bb{\Theta_{p_n}^{\circ\bb{k-1}}\bb{\frac1j},2^n}=\Theta_{p_n}^{\circ\bb{-\bb{k-1}}}\bb{\frac d{dz}g_1\bb{\frac1j,2^n}}=e^{-\frac{2\pi i\bb{k-1}}{p_n}}\cdot a.
$$
We conclude that
$$
\prodit k 1 {p_n}\bb{\frac d{dz}g_1\bb{z_{k-1},2^n}+e^{\frac{2\pi i}{p_n}}}=\prodit k 1 {p_n}\bb{e^{-\frac{2\pi i k}{p_n}}\cdot a+1}=\prodit k 0 {p_n-1}\bb{1+a\cdot e^{\frac{2\pi ik}{p_n}}}
$$
implying that
$$
\abs{ \prodit k 1 {p_n}  \bb{\frac d{dz}g_1\bb{z_{k-1},2^n}+e^{\frac{2\pi i}{p_n}}}-1}= \abs{\prodit k 0 {p_n-1}\bb{1+a\cdot e^{\frac{2\pi ik}{p_n}}}-1}.
$$
To estimate the latter, note that if $\varphi:=e^{\frac{2\pi i}{p_n}}$, then $\bb{z^{p_n}-1}=\prodit k 0 {p_n-1}\bb{z-\varphi^k}$ as the both are polynomials of degree $p_n$ with the same $p_n$ roots (of unity) and leading coefficient 1.
We derive from that,
$$
\prodit k 0 {p_n-1}\bb{1+a\cdot e^{\frac{2\pi ik}{p_n}}}=\prodit k 0 {p_n-1}\bb{1-a\cdot \varphi^k}=a^{p_n}\prodit k 0 {p_n-1}\bb{\frac1a- \varphi^k}=a^{p_n}\bb{\frac1{a^{p_n}}-1}=1-a^{p_n}
$$
and therefore
$$
\abs{\prodit k 1 {p_n}\bb{\frac d{dz}g_1\bb{z_{k-1},2^n}+e^{\frac{2\pi i}{p_n}}}-1}=\abs{1-\bb{\frac d{dz}g_1\bb{\frac1j,2^n}}^{p_n}-1}=\abs{\frac d{dz}g_1\bb{\frac1j,2^n}}^{p_n}>0
$$
as $\frac d{dz}g_1\bb{\frac1j,2^n}\neq 0$. We conclude that $dF^{\circ{p_n}} - \operatorname{Id}$ is non-degenerate on $Z$, and the points in $Z_n$ are isolated $p_n$-PPP.

In fact, the same calculation shows that for every $\ell\in\bset{1,\cdots,p_n-1}$ the point $\bb{\Theta_{p_n}^{\circ\ell}\bb{\frac1j},2^n}=\bb{\frac{e^{\frac{2\pi i\ell}{p_n}}}j,2^n}\in Z_n$ is isolated $p_n$-PPP as well.

\noindent{\bf Observation 3:} Growth bound- $\log M_F(R)\le C\log^2(R+1)+C$.

We use the bounds presented in Lemmas~\ref{lem:dispatcher} and \ref{lem:fancy_CS}. For any $R>10$
\begin{align*}
\log M_F(R)&\le \log M_G(R)+\underset{m\in\N}\sup \log M_{D_m}(R)+2\log(R+1)\\
&\le 40\log^2(R+1)+C+C\log^2(R+1)\le 2C\log^2(R+1)+C
\end{align*}
for some uniform constant $C>40$. In other words, $F$ is of order zero.
\end{proof}


\bibliographystyle{plain}
\bibliography{refs}

\bigskip
\noindent A.G.: Einstein Institute of Mathematics, Edmond J. Safra Campus, The Hebrew University of Jerusalem, Givat Ram. Jerusalem, 9190401, Israel
\newline{\tt https://orcid.org/0000-0002-6957-9431}
\newline{\tt adi.glucksam@mail.huji.ac.il}

\bigskip
\noindent S.T.: Department of Mathematics, Weizmann Institute of Science 234 Herzl St. PO Box 26. Rehovot ,7610001, Israel
\newline {\tt https://orcid.org/0000-0001-6389-6922}
\newline{\tt tanny.shira@gmail.com} 

\end{document}